\theoremstyle{plain}
\newtheorem{thm}{Theorem}[section]
\newtheorem{theorem}[thm]{Theorem}
\newtheorem{proposition}[thm]{Proposition}
\theoremstyle{definition}
\newtheorem{definition}[thm]{Definition}
\newtheorem{problem}[thm]{Problem}
\newtheorem{thevarthm}[thm]{\varthmname}
\newenvironment{varthm}[1]{\def\varthmname{#1}\begin{thevarthm}}{\end{thevarthm}\def\varthmname{}}
\newenvironment{varthm*}[1]{\trivlist\item[]{\bf #1.}\it}{\endtrivlist}
\renewcommand\ge{\geqslant}
\renewcommand\geq{\geqslant}
\renewcommand\leq{\leqslant}
\newcommand\be{\begin{eqnarray*}}
\newcommand\ee{\end{eqnarray*}}
\newcommand\compact{\itemsep=0cm \parskip=0cm}
\newcommand\Q{\mathbb Q}
\newcommand\newop[2]{\def#1{\mathop{\rm #2}\nolimits}}
\newop\log{log}
\newop\ord{ord}
\newop\Gal{Gal}
\newop\SL{SL}
\newop\Bl{Bl}
\newop\mult{mult}
\newop\mass{mass}
\newop\div{div}
\newop\codim{codim}
\newop\sing{sing}
\newop\vdim{vdim}
\newop\edim{edim}
\newop\Ass{Ass}
\newop\size{size}
\newop\reg{reg}
\newop\satdeg{satdeg}
\newop\supp{supp}
\newop\Neg{Neg}
\newop\Nef{Nef}
\newop\Nefh{Nef_H}
\newop\Eff{Eff}
\newop\Zar{Zar}
\newop\MB{MB}
\newop\MBxC{MB\mathit{(x,C)}}
\newop\NnB{NnB}
\newop\Bigg{Big}
\newop\Effbar{\overline{\Eff}}
\begin{document}

\title{On integral Zariski decompositions of pseudoeffective divisors on algebraic surfaces}

\author{B.\ Harbourne}
\address{Department of Mathematics\\
University of Nebraska\\
Lincoln, NE 68588-0130 USA} \email{bharbour@math.unl.edu}

\author{P. Pokora}
\address{Institute of Mathematics, Pedagogical University\\
Podchor\c a\.zych 2, PL-30-084, Krak\'ow, Poland}
 \email{piotrpkr@gmail.com}

\author{H.\ Tutaj-Gasi\'nska}
\address{
Departament of Mathematics and Computer Sciences, Jagiellonian University\\
{\L}ojasiewicza 6, PL-30-348 Krak\'ow} \email{htutaj@im.uj.edu.pl}

\begin{abstract}
In this note we consider the problem of integrality of Zariski decompositions for pseudoeffective
integral divisors on algebraic surfaces. We show that while sometimes
integrality of Zariski decompositions forces all negative curves to be $(-1)$-curves,
there are examples where this is not true.
\end{abstract}

\keywords {Zariski decomposition, K3 surfaces, blow ups, projective plane}

\subjclass[2010]{14C20, 14M25}

\date{July 22, 2015}

\thanks{\emph{Acknowledgements}:
The second author would like to thank Roberto Laface for conversations on the topic of this paper. The second author is partially supported by National Science
Centre Poland Grant 2014/15/N/ST1/02102 and the third author is supported by National Science Centre Poland Grant 2014/15/B/ST1/02197.}

\maketitle

\section{Introduction}
In this note we work over an arbitrary algebraically closed field $K$.
By a negative curve, we mean a reduced irreducible divisor $C$ with $C^2<0$
on a smooth projective surface.
By a $(-k)$-curve, we mean a negative curve $C$ with $C^2=-k<0$.

In \cite{BPS14} the first author with Th.\ Bauer and D.\ Schmitz studied the following problem for algebraic surfaces.

\begin{varthm*}{Question}\rm\
 Let $X$ be a smooth projective surface. Does there exist an integer $d(X) \ge 1$
 such that for every pseudoeffective integral divisor $D$ the denominators in the
 Zariski decomposition of $D$ are bounded from above by $d(X)$?
\end{varthm*}

Such a question is natural when one studies Zariski decompositions \cite{Zar62} of
pseudoeffective divisors since we have the following geometric interpretation.
Given a pseudo-effective integral divisor $D$ on $X$ with
Zariski decomposition $D=P+N$, then for every sufficiently divisible integer $m \ge 1$  we have the equality
   $$
      H^0(X,\mathcal O_X(mD)) = H^0(X, \mathcal O_X(mP))\,,
   $$
which means that all sections of $\mathcal O_X(mD)$ come from the nef line bundle $\mathcal O_X(mP)$.
Sufficiently divisible is required in order to clear denominators in $P$ and obtain Cartier divisors.

If such a bound $d(X)$ exists, then we say that $X$ has \emph{bounded Zariski
denominators}. It is an intriguing question as to whether a given smooth surface
satisfies this boundedness condition.
For example, it was shown in \cite{BPS14} that, somewhat surprisingly, boundedness of Zariski
denominators on a smooth projective surface $X$ is equivalent to $X$ having bounded negativity (i.e.,
that there exists a number $b(X) \in \mathbb{Z}$ such that $C^2 \geq -b(X)$ for every negative
curve $C$).
Bounded negativity has connections to substantial open conjectures.
For example, for a surface $X$ obtained by blowing up $\mathbb{P}^{2}$
at any finite set of generic points, the Segre-Harbourne-Gimigliano-Hirschowitz Conjecture
(i.e., the SHGH Conjecture) \cite{Har86} asserts that $h^1(X, \mathcal O_X(F))=0$ for every effective nef divisor $F$
and in addition that all negative curves on $X$ are $(-1)$-curves.
The Bounded Negativity Conjecture (BNC) is another even older still open conjecture which
asserts that smooth complex projective surfaces all have bounded negativity.
Thus the equivalence of boundedness of Zariski denominators and bounded negativity provides a
new perspective on these conjectures and also sheds some light on links between numerical
information about divisors on a given surface $X$ and the possible
negative curves on $X$.

An interesting criterion for surfaces to have bounded Zariski denominators was given in \cite{BPS14}, as follows,
where we say that a pseudoeffective integral divisor $D$ has an \emph{integral Zariski decomposition} $D = P + N$ if
$P$ and $N$ are defined over the integers (i.e., all coefficients occurring in $P, N$ are integers).

\begin{proposition}
Let $X$ be a smooth projective surface such that for every reduced and irreducible
curve $C$ one has $C^2 \geq -1$. Then all integral pseudoeffective divisors on
$X$ have integral Zariski decompositions.
\end{proposition}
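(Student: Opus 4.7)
The plan is to use the defining properties of the Zariski decomposition directly, combined with the hypothesis on negative curves, and then argue that the relevant intersection matrix is as simple as possible.

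First I would recall the structure of the Zariski decomposition. Given a pseudoeffective integral divisor $D$, we have $D = P + N$ with $N = \sum_{i=1}^r a_i N_i$, where the $N_i$ are pairwise distinct irreducible curves, the $a_i$ are nonnegative rational numbers, $P$ is nef with $P \cdot N_i = 0$ for all $i$, and crucially the intersection matrix $M = (N_i \cdot N_j)_{i,j}$ is negative definite. Since $P \cdot N_j = 0$, the conditions $D \cdot N_j = N \cdot N_j$ give the linear system $M \cdot (a_1,\dots,a_r)^t = (D\cdot N_1,\dots,D\cdot N_r)^t$, whose right-hand side is an integer vector. Thus integrality of the $a_i$ would follow immediately if we can prove that $M$ has determinant $\pm 1$, or better yet, that $M = -I$.

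The second step is to identify the entries of $M$. Negative definiteness of $M$ forces $N_i^2 < 0$, so each $N_i$ is a negative curve, and by hypothesis $N_i^2 \geq -1$; combined, this gives $N_i^2 = -1$. For $i \neq j$, the two distinct irreducible curves intersect non-negatively, so $N_i \cdot N_j \in \Z_{\geq 0}$. Hence $M$ has $-1$'s on the diagonal and non-negative integer entries off the diagonal.

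The key step is to show every off-diagonal entry of $M$ is in fact $0$. Suppose for contradiction that $N_i \cdot N_j = a \geq 1$ for some $i\neq j$. The $2 \times 2$ principal submatrix indexed by $\{i,j\}$ is
\[
\begin{pmatrix} -1 & a \\ a & -1 \end{pmatrix},
\]
with determinant $1 - a^2 \leq 0$. But Sylvester's criterion for negative definiteness requires all $2\times 2$ principal minors to be strictly positive, a contradiction. I expect this to be the only mildly non-trivial step; the rest is bookkeeping.

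Finally I would conclude: $M = -I$, so the system $M\cdot(a_i) = (D\cdot N_i)$ yields $a_i = -D\cdot N_i \in \Z$. Hence $N$ has integer coefficients, and therefore so does $P = D - N$, giving an integral Zariski decomposition.
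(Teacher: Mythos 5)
Your proof is correct. Note that the paper itself does not prove this proposition --- it is quoted from \cite{BPS14} without proof --- but your argument is the natural one and, as far as I can tell, coincides with the standard proof: negative definiteness of the intersection matrix of $\supp N$ forces each component to be a negative curve, hence a $(-1)$-curve by hypothesis, and the positivity of the $2\times 2$ principal minors forces the components to be pairwise disjoint, so the matrix is $-I$ and the linear system $M\cdot(a_i)=(D\cdot N_i)$ has integral solution $a_i=-D\cdot N_i$. All the steps (diagonal entries of a negative definite matrix are negative; $D\cdot N_j=N\cdot N_j$ since $P\cdot N_j=0$; integrality of $P=D-N$ follows from that of $N$) check out.
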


This raises the converse question:

\begin{varthm}{Question}\label{problem1}
Let $X$ be a smooth projective surface having the property that
\begin{center}
$(\ast) \,\,\,\, $ \emph{every integral pseudoeffective divisor $D$ has an integral Zariski decomposition.}
\end{center}
Is every negative curve then a $(-1)$-curve?
\end{varthm}

 The condition $(\ast)$ at first glance seems to be very constraining, so it is
 plausible that Question \ref{problem1} could have an affirmative answer.
 However, by our main result we see that the answer is negative.

\begin{varthm*}{Theorem A}
There exists a smooth complex projective surface $X$ having the property that all
integral pseudoeffective divisors have integral Zariski decompositions yet all
negative curves on $X$ have self-intersection $-2$.
\end{varthm*}

On the other hand, sometimes the answer is affirmative:

\begin{varthm*}{Theorem B}
Let $X$ be a smooth projective surface such that
every integral pseudoeffective divisor $D$ has an integral Zariski decomposition (i.e., $d(X)=1$)
and such that $|\Delta(X)|=1$, where $\Delta(X)$ is the determinant of the intersection form
on the N\'eron-Severi lattice of $X$. Then all negative curves on $X$ are $(-1)$-curves (i.e., $b(X)=1$).
\end{varthm*}

This follows from \cite[Theorem 2.3]{BPS14}, which gives the bound $b(X)\leq d(X)(d(X)!)|\Delta(X)|$.
Thus, for example, if $X$ is a blow up of $\mathbb{P}^{2}$ at a finite set of points, then
$|\Delta(X)|=1$, so if Zariski decompositions are integral on $X$, then $d(X)$ is also 1, hence $b(X)=1$.
Because of the recent interest in blow ups of $\mathbb{P}^{2}$ at finite sets of points
(see, for example, \cite{deFernex, BDHHLPS14, DHNSST15}),
a direct proof in the special case of blow ups of $\mathbb{P}^{2}$ may be useful.
We provide such a proof below.

\section{Results}
Before we present the main result of this note let us recall
the definition of Zariski decompositions.

\begin{definition}[Fujita-Zariski decomposition \cite{Fuj79, Zar62}]
Let $X$ be a smooth projective surface and $D$ a
pseudo-effective integral divisor on $X$. Then $D$ can be
written uniquely as a sum
$$D = P + N$$
of $\Q$-divisors such that
\begin{itemize}\compact
\item[(i)] $P$ is nef,
\item[(ii)] $N$ is effective with negative definite intersection matrix if $N\ne 0$, and
\item[(iii)] $P\cdot C=0$ for every component $C$ of $N$.
\end{itemize}
\end{definition}

Now we are ready to produce the surface whose existence is asserted in Theorem A.

\begin{theorem}
Let $X$ be a smooth projective K3 surface of Picard number $2$ having intersection form
$$
\begin{pmatrix}
-2 & \ 4\\
\ 4 & -2
\end{pmatrix}.
$$
Then all integral pseudoeffective divisors on $X$ have integral Zariski decompositions.
\end{theorem}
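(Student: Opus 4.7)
The plan is to reduce the integrality question to a short case analysis on the possible supports of the negative part of the Zariski decomposition. Two structural facts drive the argument. First, since $K_X=0$, adjunction forces every negative curve on $X$ to be a smooth rational $(-2)$-curve. Second, the Gram matrix displayed in the statement has all entries in $2\Z$, and consequently $D_1\cdot D_2\in 2\Z$ for every pair of integral divisor classes $D_1,D_2$ on $X$.

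Now let $D$ be an integral pseudoeffective divisor with Zariski decomposition $D=P+N$, and write $N=\sum_{i=1}^r a_iC_i$ where the $C_i$ are distinct negative curves whose intersection matrix $M=(C_i\cdot C_j)$ is negative definite. The coefficients $(a_i)$ are the unique rational solution of the linear system $M(a_i)^T=(D\cdot C_i)^T$, so integrality of $N$ reduces to showing this system has an integer solution. Since $M$ is negative definite the $C_i$ are linearly independent in the rational N\'eron--Severi space; as the Picard number of $X$ is $2$, this yields $r\le 2$. By the first observation above, each $C_i$ satisfies $C_i^2=-2$.

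If $r=0$ there is nothing to check. If $r=1$, then $N=aC$ with $a=-(D\cdot C)/2$, which is an integer because $D\cdot C$ is even. If $r=2$, then
\[
M=\begin{pmatrix} -2 & c\\ c & -2\end{pmatrix},\qquad c=C_1\cdot C_2\ge 0,
\]
where negative definiteness requires $c^2<4$ while the evenness observation forces $c$ to be even. Hence $c\in\{0,2\}$, and $c=2$ is excluded since it makes $\det M=0$. So $c=0$; the two curves are disjoint, $M$ is diagonal, and each $a_i=-(D\cdot C_i)/2$ is again an integer.

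The main obstacle, and the observation on which everything hinges, is the evenness of \emph{all} intersection numbers on $X$, which is forced by the specific Gram matrix and is stronger than the evenness of $\mathrm{NS}(X)$ that every K3 surface enjoys. Without it the case $r=1$ already fails in general. Once that is noted, the a priori bound $r\le 2$ coming from the Picard number keeps the analysis finite, and crucially one does not need to identify which $(-2)$-curves actually appear on $X$; the argument rules out all bad configurations on purely lattice-theoretic grounds.
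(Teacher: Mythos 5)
Your proof is correct, but it takes a genuinely different route from the paper's. The paper works out the geometry of $X$ explicitly: it determines that the effective cone is spanned by the two basis curves $C_1,C_2$, that these are the only negative curves, that the nef cone is cut out by $2m\ge n$ and $2n\ge m$, and then simply writes down the integral decomposition $D=n(2C_1+C_2)+(m-2n)C_1$ in the non-nef case. You instead bypass any identification of the actual negative curves and argue purely lattice-theoretically: adjunction on a K3 gives $C^2=-2$ for every negative curve, the specific Gram matrix makes the intersection form $2\Z$-valued on all of $\mathrm{NS}(X)$ (correctly noted as stronger than ordinary evenness of the K3 lattice), and then the linear system $M(a_i)^T=(D\cdot C_i)^T$ defining the negative part has an integral solution in every admissible configuration. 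Your case analysis is sound; I would only remark that the $r=2$ case is in fact vacuous, since a lattice of signature $(1,1)$ contains no negative definite sublattice of rank $2$, though your conditional treatment of it is harmless and correct. What your approach buys is generality and robustness: essentially the same parity argument shows that integral Zariski decompositions exist on any K3 surface whose N\'eron--Severi form is $2\Z$-valued (disjointness of the relevant $(-2)$-curves then being forced by negative definiteness), without ever computing a cone of curves. What the paper's approach buys is explicit information -- the effective and nef cones and the complete list of negative curves -- which is what makes the surface usable as the counterexample of Theorem A (all negative curves have self-intersection $-2$); note also that the paper's proof cites the existence of such a K3 surface, which your argument, being conditional on the hypotheses of the statement, does not need but which is essential for Theorem A.
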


\begin{proof}
The existence of such a surface $X$ is a consequence of \cite[(2.9i), (2.11)]{M84} (see also
\cite[Corollary 1.4]{K94}). Let $C_1$ and $C_2$ be the negative curves 
giving a basis for the N\'eron-Severi group with $C_1^2=C_2^2=-2$ and $C_1\cdot C_2=4$. 
Every divisor $D$, up to numerical equivalence, is of the form $mC_1+nC_2$ for integers
$m$ and $n$. If $m<0$ and $n\leq0$ (or vice versa), then clearly $D$ is not effective.
If $m<0$ but $n>0$, then $D\cdot C_2<0$, so $D$ is effective if and only if $D-C_2$ is,
and continuing this way we see that $D$ is effective if and only if $mC_1$ is, but
$m<0$, so $mC_1$ is not effective. Thus $D$ is effective if and only if $m\geq 0$ and $n\geq 0$,
and hence $D$ is effective if and only if it is pseudoeffective.
If $D$ were a negative curve (hence effective) but $D\neq C_1$ and $D\neq C_2$, then
$D\cdot C_1\geq0$ and $D\cdot C_2\geq0$, so $D^2 \geq 0$, contrary to assumption.
Thus $C_1$ and $C_2$ are the only negative curves.
Thus $D$ is nef if and only if $D\cdot C_i\geq 0$ for $i=1,2$;
i.e., if and only if $2m\geq n$ and $2n\geq m$. It is not hard to check that this holds if and only if
$D$ is a nonnegative integer linear combination of $C_1+C_2$, $2C_1+C_2$ and $C_1+2C_2$.

So say $D$ is pseudoeffective and integral; i.e., $D=mC_1+nC_2$ for $m,n\geq0$. By symmetry,
it is enough to assume that $m\geq n$. If $2n\geq m$, then $D$ is nef and the
Zariski decomposition $D=P+N$ of $D$ is integral since $P=D$ and $N=0$.
Now assume $m>2n$. Take $P=n(2C_1+C_2)$ and $N=(m-2n)C_1$.
Then $P$ is nef, $N$ clearly has negative definite intersection matrix and $P\cdot N=0$,
so $D=P+N$ is again an integral Zariski decomposition of $D$.
\end{proof}

Now we provide a proof of Theorem B in the special case mentioned above.

\begin{varthm*}{Theorem B}
Let $\pi:X\to\mathbb{P}^{2}$ be the blow up of a finite set of points $p_1,\ldots,p_s$ (possibly infinitely near).
Suppose that every integral pseudoeffective divisor $D$ has an integral Zariski decomposition.
Then all negative curves on $X$ have self-intersection $-1$ (i.e., are $(-1)$-curves).
\end{varthm*}

\begin{proof}
Denote $\pi^{-1}(p_i)$ by $E_i$ and the total transform of a line by $H$.
We will consider two cases:
in the first case we assume that
none of the points $p_i$ is infinitely near another
(so the points $p_i$ are distinct points of $\mathbb{P}^{2}$)
and in the second case we define: $X_1$ to be the blow up of
$X_0=\mathbb{P}^{2}$ at any point $p_1\in X_0$;
$X_2$ the blow up of $X_1$ at any point $p_2\in X_1$,
so $p_2$ can be infinitely near to $p_1$; etc..
Continuing in this way we eventually have that
$X=X_s$ is the blow up of $X_{s-1}$ at any point $p_s\in X_{s-1}$.
In order to avoid confusion, we indicate the exceptional curve for
the blow up of $p_i\in X_{i-1}$ by $E_{i,i}\subset X_i$, and its total transform
on $X_j$ for $j>i$ by $E_{i,j}$. For simplicity, we denote $E_{i,s}$ by $E_i$.
Thus $E_{i,i}$ is always irreducible, and $E_{i,j}$ is irreducible
if and only if no point $p_l$ for $i<l\leq j$ is infinitely near to $p_i$.

We begin with case 1. Suppose to the contrary that $X$ has a $(-k)$-curve $C$ with
$k > 1$. Since the classes of $H, E_1,\ldots, E_s$ give a basis for the divisor class group of $X$,
up to linear equivalence we can write $C = dH -\sum_{j=1}^{r} b_{i_{j}}E_{i_{j}}$.
Since none of the points $p_i$ is infinitely near another, the only negative curves
with $d=0$ are the $E_i$, and these are $(-1)$-curves, so we must have $d>0$
and $b_{i_j}>0$ with $C^2= d^2 - \sum_{j=1}^{r} b_{i_{j}}^{2} = -k$.

We claim that there is a big integral divisor $D$ such that $D = P + aC$ with $a
\in \mathbb{Q} \setminus \mathbb{Z}$. To see this, note that
if $d'\gg0$ and $a_i>0$ are integers, then
$$A = d'H -\sum_{i=1}^{s} a_{i}E_{i}$$
will be an integral ample divisor.
For $D$ we take $D = A + eC$ for $e \in \mathbb{Z}_{ > 0}$, where we
choose a number $e$ such that $D\cdot C < 0$ so we have
$$0>D\cdot C = (A+eC)\cdot C=A\cdot C-ek=d'd - \sum_{j=1}^ra_{i_j}b_{i_{j}} -ke.$$
Finding the Zariski decomposition of $D$ boils
down to computing $a$. Observe that
$$a = \frac{dd' - \sum_{j=1}^ra_{i_j}b_{i_j} -ke}{-k} = e +\frac{\sum_{j=1}^ra_{i_j}b_{i_{j}} -dd'}{k}.$$
We just need to show that $k$ does not divide $\sum_ja_{i_j}b_{i_{j}} - dd'$.

Suppose that $k$ divides $\sum_ja_{i_j}b_{i_j} - dd'$. Then we replace $A$ by $A+H$ so $d'$ becomes $d'+1$.
If $k$ does not divide $\sum_ja_{i_j}b_{i_{j}} - dd'-d$, then we are done. If $k$ divides this new
number, then it means that $k | d$. In this case we replace $A$ instead by $A+H-E_{i_1}$. Since $H-E_{i_1}$ is nef,
$A+H-E_{i_1}$ is ample, and we get $d'+1$ in place of $d'$ and $a_{i_1}+1$ in place of $a_{i_1}$.
If $k$ does not divide the number $\sum_ja_{i_j}b_{i_j} +b_{i_1} - d(d'+1)$, then we are done.
If $k$ divides this number, then it means that $k|b_{i_1}$. We proceed along the same lines for
all $j$; we are done unless $k|b_{i_j}$ for all $j$. But this is impossible because then $k^2$
would divide $d$ and each $b_{i_j}$, so $k^2$ would divide $k =-(d^{2} -\sum_j b_{i_{j}}^2)$,
where $k$ is an integer bigger than 1.

Now consider case 2. If none of the points is infinitely near another, then we
are in case 1, so we may assume one the points $p_1,\ldots,p_s$
is infinitely near another. Let $p_j$ be the first such point, and let
$p_i$ be the point $p_j$ is infinitely near to. Thus $p_1,\ldots,p_{j-1}$
are points of $\mathbb{P}^{2}$ and $p_j$ is on the exceptional locus $E_{i,j-1}$
of $p_i$ for some $i<j$. After reindexing, we may assume that $i=1$ and $j=2$.
(The only constraint on reindexing is that if $p_v$ is infinitely near to $p_u$, then $v>u$.)
On $X_2$, the curve $E_{1,2}$ has two irreducible components, $E_{1,2}=E+E_{2,2}$.
Thus we have $E^2=-2$.

Here we take $D=3H-E_{1,2}-2E_{2,2}$. Up to linear equivalence, we can write
$2D=6H-2E_{1,2}-4E_{2,2}=6(H-E_{1,2})+4E=[6(H-E_{1,2})+3E]+E$ so $D$ is pseudoeffective (in fact it is linearly
equivalent to an effective divisor).
Since $H-E_{1,2}$ is nef and $(6(H-E_{1,2})+3E)\cdot E=0$, we see that
$D=P+N$ for $P=(6(H-E_{1,2})+3E)/2$ and $N=E/2$ is a nonintegral Zariski decomposition on $X_2$.

However, $\pi$ factors as $\pi: X\stackrel{\pi_2}{\rightarrow} X_{2} \rightarrow\mathbb{P}^{2}$,
where $\pi_2$ is the sequential blow up of the points $p_3,\ldots, p_s$.
Take the pull-back
$$2\pi_2^*(D)=\pi_2^*(2D) = \pi_2^*(2P + 2N) = \pi_2^*(2P) + \pi_2^*(2N)$$
and note that this is a Zariski decomposition.
Indeed, observe that the pull-back of a nef divisor
is nef, so $\pi_2^{*}(2P)$ is nef, and
$\pi_2^{*}(2P)\cdot\pi_2^{*}(2N) = 4P\cdot N = 0$. The only things left to
show is that the intersection matrix of $\pi_2^{*}(2N)$ is negative definite and that
$\pi_2^{*}(2N)/2$ is not integral.
However, up to linear equivalence, $N=E=E_1-E_2$, and the total transforms of
$E_1$ and $E_2$ under $\pi_2$ are in the span (in the divisor class group)
of $E_1,\ldots, E_s$. (Just as the components of the total transform of $E_1$ under
$X_2\to X_1$ are $E_2$ and $E=E_1-E_2$, every component of $E_i$ on $X$
is a linear integer combination of $E_i,\ldots,E_s$.)

Since $\pi_2^{*}(2N)$ is in the span of $E_i,\ldots,E_s$ and this span is negative definite,
the intersection matrix for the components of $\pi_2^{*}(2N)$ is also negative definite.
Since $((\pi_2^{*}(2N))/2)^2=N^2=(E/2)^2=-1/2$, we see that $\pi_2^{*}(2N))/2$ is not integral.
Thus $\pi_2^*(D)=\pi_2^*(2P)/2 + \pi_2^*(2N)/2$ gives a nonintegral Zariski decomposition on $X$.
\end{proof}

We end by posing the following problem.

\begin{problem}
Classify all algebraic surfaces with $d(X)=1$.
\end{problem}




\end{document}